\font\tensym=msbm10 \font\sevensym=msbm7 \font\fivesym=msbm5
\def\R{{\mathbb{R}}}
\newtheorem{proposition}{Proposition}
\newtheorem{lemma}{Lemma}
\newtheorem{theorem}{Theorem}
\newcommand{\be}{\begin{eqnarray*}}
    \newcommand{\ee}{\end{eqnarray*}}
\newcommand{\ben}{\begin{eqnarray}}
\newcommand{\een}{\end{eqnarray}}
\begin{document}
\renewcommand{\thefootnote}{\arabic{footnote}}

\begin{center}
{\Large \textbf{Berry-Ess\'een    bound for drift estimation of fractional Ornstein Uhlenbeck process  of second kind }} \\[0pt]
~\\[0pt]
Maoudo Faramba Balde\footnote{%
Cheikh Anta Diop University, Dakar, Senegal.  Email:
\texttt{faramba88@gmail.com}}\footnote{M. F. Balde would like to
acknowledge the NLAGA project of SIMONS foundation and the CEA-MITIC
that partially supported this work.} Rachid Belfadli\footnote{%
 Department of Sciences and Techniques, Cadi Ayyad University, Morocco. Email: \texttt{belfadli@gmail.com}}  Khalifa Es-Sebaiy\footnote{
 Department of Mathematics, Kuwait University, Kuwait. E-mail: \texttt{khalifa.essebaiy@ku.edu.kw}}
 \\[0pt]
~\\[0pt] Cheikh Anta Diop University, Cadi Ayyad University and Kuwait University\\[0pt]
~\\[0pt]
\end{center}
\begin{abstract}
\medskip
In the present paper we consider  the Ornstein-Uhlenbeck process of
the second kind defined as solution to the equation $dX_{t} =
-\alpha X_{t}dt+dY_{t}^{(1)},
   \ \ X_{0}=0$, where $Y_{t}^{(1)}:=\int_{0}^{t}e^{-s}dB^H_{a_{s}}$ with
$a_{t}=He^{\frac{t}{H}}$, and  $B^H$ is a fractional Brownian motion
with Hurst parameter $H\in(\frac12,1)$, whereas    $\alpha>0$ is
unknown parameter to be estimated. We obtain the upper bound
$O(1/\sqrt{T})$ in Kolmogorov distance for normal approximation of
the least squares estimator of the drift parameter $\alpha$ on the
basis of the continuous observation $\{X_t,t\in[0,T]\}$, as
$T\rightarrow\infty$. Our method is based on the work of
\cite{kp-JVA}, which is proved using a combination of Malliavin
calculus and Stein's method for normal approximation.
\end{abstract}
\noindent {{\bf Keywords:}} Rate of convergence of CLT; Fractional
Ornstein-Uhlenbeck processes; Least squares estimator; Malliavin
calculus.
\section{Introduction}
Consider the  fractional Ornstein-Uhlenbeck process (fOU) of the
second kind , defined as the unique pathwise solution to
\begin{equation}
  \label{FOUSK}
  \left\lbrace\begin{aligned}
        &dX_t=-\alpha X_tdt+dY_{t}^{(1)},\quad
t\geq0,\\
      &X_0=0,
    \end{aligned}\right.
\end{equation}
 where
$Y_{t}^{(1)}:=\int_0^t e^{-s}dB^H_{a_s}$  with
$a_{t}=He^{\frac{t}{H}}$, and $B^H := \left\{B^H_t, t \geq
0\right\}$ is a fractional Brownian motion  with Hurst parameter $H
\in ( \frac12, 1)$, whereas    $\alpha>0$ is considered as unknown
parameter.

Let $\widetilde{\alpha}_T$ be the least squares estimator (LSE) for
the parameter $\alpha$,  proposed in the paper \cite{HN}, which is
defined by
\begin{eqnarray}\widetilde{\alpha}_T=\frac{\int_0^TX_tdX_t}{\int_0^TX_t^2dt}=\alpha-\frac{\int_0^TX_tdY_{t}^{(1)}}{\int_0^TX_t^2dt},\label{LSE-FOUSK}
\end{eqnarray}
where the integral with respect to $Y^{(1)}$ is interpreted in the
Skorohod sense.

 \cite{AM} proved that the LSE
$\widetilde{\alpha}_T$ is  consistent and asymptotically normal for
the whole range $H\in(\frac12 , 1)$,  based on the continuous
observation $\{X_t,0\leq t\leq T\}$ as $T\rightarrow\infty$.

However, the study of the asymptotic distribution of an estimator is
not very useful in general for practical purposes unless the rate of
convergence is known. To our knowledge, no result of the
Berry-Ess\'een type is known for the distribution of the LSE
$\widetilde{\alpha}_t$  of the drift parameter $\alpha$ of the fOU
of the second kind (\ref{FOUSK}). The aim of the present work, in
the case $H\in(\frac12 , 1)$, is to provide   an upper bound of
Kolmogorov distance for central limit theorem (CLT) of the LSE
$\widetilde{\alpha}_T$ in the following sense: There exists constant
$0 < C < \infty$, depending only on $\alpha$ and $H$, such that for
all  sufficiently large positive  $T$,
\begin{eqnarray*}
\sup_{z\in \mathbb{R}}\left\vert
P\left(\frac{\sqrt{T}}{\sigma_{\alpha,H}}\left(
\alpha-\widetilde{\alpha}_t\right)\leq z\right)-P\left( Z\leq
z\right)\right\vert\leq \frac{C}{\sqrt{T}},
\end{eqnarray*}
where  $Z$ denotes a standard normal random variable, and the
positive constant $\sigma_{\alpha,H}$ is given by
\begin{eqnarray}
\sigma_{\alpha,H}:=\frac{\alpha}{H\beta(H\alpha+1-H,2H-1)}\sqrt{2\int_{(0,\infty)^3}
F(y_1,y_2,y_3)dy_1dy_2dy_3}<\infty,\label{exp-sigma}
\end{eqnarray}
where $\beta$ denotes the classical Beta function,
$\sigma_{\alpha,H}<\infty$ (see \cite{AV}), and the function $F$ is
defined by
\begin{eqnarray}F(y_1,y_2,y_3):=e^{-\alpha|y_1-y_3|}e^{-\alpha y_2} e^{(1-\frac{1}{H})(y_1+y_2+y_3)} \left|
e^{-\frac{y_2}{H}}-e^{-\frac{y_3}{H}}\right|^{2H-2}\left|
1-e^{-\frac{y_1}{H}}\right|^{2H-2}.\label{fct-F}
\end{eqnarray}

 Let us also describe what is known about this estimation problem in the case
of the Ornstein-Uhlenbeck process of the first kind, defined as
solution to the equation
\begin{eqnarray}
  dX_{t} &=& -\alpha X_{t}dt+dB_{t}^{H},\ \ X_{0}=0,\label{fOU-equ}
\end{eqnarray}
where $\alpha$ is an unknown parameter, and $B^H$ is a fBm with
Hurst parameter $H \in ( 0 , 1)$. The drift parameter estimation
problem  for (\ref{fOU-equ}) observed in continuous time and
discrete time has been studied by using several approaches (see
\cite{KL,HN,HS,BI,EEV,DEV,SV}). In a general case when the process
(\ref{fOU-equ}) is driven by a Gaussian process, \cite{EEO} studied
the non-ergodic case corresponding to $\alpha<0$. They provided
sufficient conditions, based on the properties of the driving
Gaussian process, to ensure that least squares estimators-type of
$\alpha$ are strongly consistent and asymptotically Cauchy. On the
other hand, using  Malliavin calculus advances (see \cite{NP-book}),
\cite{EV} provided new techniques to statistical inference for
stochastic differential equations related to stationary Gaussian
processes, and they used their result to study drift parameter
estimation problems for some stochastic differential equations
driven by fractional Brownian motion with fixed-time-step
observations (in particular for the fOU $X$ given in (\ref{fOU-equ})
with $\alpha>0$). Recently, a Berry-Ess\'een bound of the LSE of the
drift parameter $\alpha>0$ based on the continuous-time observation
of the process (\ref{fOU-equ}) is provided in \cite{CKL} and
\cite{CL} for $H\in[\frac12,\frac34]$ and $H\in(0,\frac12)$,
respectively.

Our article is structured as follows. In section 2, we recall some
basic elements of Malliavin calculus   which are helpful for some of
the arguments we use, and   the  result of \cite{kp-JVA} used in
this paper. In section 3, we provide a rate of convergence to
normality of the LSE $\widetilde{\alpha}_t$ given in
(\ref{LSE-FOUSK}), for any $\frac12<H<1$.

\section{Preliminaries}
In this section, we briefly recall some basic elements of Gaussian
analysis, and Malliavin calculus   which are helpful for some of the
arguments we use. For more details we refer to \cite{NP-book} and
\cite{nualart-book}. We also give here the result of \cite{kp-JVA}
used in  this paper.
\\
Let $B^H = \left\{B^H_t, t \geq 0\right\}$ be a  fractional Brownian
motion (fBm) with Hurst parameter $H \in ( 0 , 1)$ that is     a
centered Gaussian process, defined on a complete probability space
$(\Omega,\mathcal{F}, P)$, with the covariance function
\[R_{H}(t,s)=\frac12\left(t^{2H}+s^{2H}-|t-s|^{2H}\right).\]
Let us now introduce the Gaussian process
$Y_{t}^{(1)}:=\int_{0}^{t}e^{-s}dB^H_{a_{s}},\ t\geq0$,  with
$a_{t}=He^{\frac{t}{H}}$.
\\
Assume that $\frac12<H<1$. Let $f:[0,\infty)\rightarrow\R$  be a
function of class $\mathcal{C}^1$. Then, (see \cite{BEV}),
\begin{eqnarray*}\label{link Y^1 and B}
\int_{s}^{t}f(r)dY^{(1)}_r &=&
\int_{a_s}^{a_t}f(a^{-1}_u)e^{-a^{-1}_u}dB_u,
\end{eqnarray*}where $a^{-1}_u=H\log(u/H)$. Moreover, for every $f,g$
in  $\mathcal{C}^1$,
\begin{eqnarray}
 && E\left(\int_{s}^{t}f(r)dY^{(1)}_r\int_{u}^{v}g(r)dY^{(1)}_r\right) \nonumber\\&=&H(2H-1)\int_{a_s}^{a_t}\int_{a_u}^{a_v}
  f(a^{-1}_x)g(a^{-1}_y)e^{-a^{-1}_x}e^{-a^{-1}_y}|x-y|^{2H-2}dxdy
  \nonumber
\\&=&H^{2H+1}(2H-1)\int_{a_s}^{a_t}\int_{a_u}^{a_v}
  f(H\log(\frac{x}{H}))g(H\log(\frac{y}{H}))(xy)^{-H}|x-y|^{2H-2}dxdy
  \label{inn.scal-1}\\
   &=&  \int_{s}^{t}\int_{u}^{v}f(w)g(z)r_{H}(w,z)dwdz, \label{inn.scal-2}
\end{eqnarray}
where $r_{H}(x,y)$ is a symmetric kernel given by
\begin{eqnarray*}
  r_{H}(w,z) &=& H^{2H-1}(2H-1) (a_wa_z)^{1-H}\left| a_w-a_z\right|^{2H-2}\\
  &=& H^{2H-1}(2H-1)\left(e^{w/H}e^{z/H}\right)^{1-H}\left| e^{w/H}-e^{z/H}\right|^{2H-2}.
\end{eqnarray*}
In particular, we obtain the following covariance given in
\cite{KS},
\begin{eqnarray*}
 \langle
1_{[s,t]},1_{[u,v]}\rangle_{\mathfrak{H}}
=E\left((Y_{t}^{(1)}-Y_{s}^{(1)})(Y_{v}^{(1)}-Y_{u}^{(1)})\right).
\end{eqnarray*}

Let $\mathcal{E}$ denote the space of all real valued step functions
on $\R$. The Hilbert space $\mathfrak{H}$ is defined as the closure
of $\mathcal{E}$ endowed with the inner product
\begin{eqnarray*}
  E\left((Y_{t}^{(1)}-Y_{s}^{(1)})(Y_{v}^{(1)}-Y_{u}^{(1)})\right) &=&
  \int_{s}^{t}\int_{u}^{v}r_{H}(w,z)dwdz.
\end{eqnarray*}
The mapping $1_{[0,t]}\mapsto Y_{t}^{(1)}$  can be extended to a
linear isometry between $\mathfrak{H}$ and the Gaussian space
$\mathcal{H}_1$ spanned by $Y^{(1)}$. We denote this isometry by
$\varphi\in\mathfrak{H}\mapsto Y^{(1)}(\varphi)$.

For a smooth and cylindrical random variable $F =
\left(Y^{(1)}(\varphi_1), \ldots , Y^{(1)}(\varphi_n)\right)$, with
$\varphi_i\in\mathfrak{H},\ i=1,\ldots,n$, and $f\in
\mathcal{C}_b^{\infty} (\R^n)$ ( $f$ and all of its partial
derivatives are bounded), we define its Malliavin derivative as the
$\mathfrak{H}$-valued random variable given by \[DF =
\sum_{i=1}^{n}\frac{\partial f}{\partial
x_i}\left(Y^{(1)}(\varphi_1), \ldots ,
Y^{(1)}(\varphi_n)\right)\varphi_i.\]

 For every $q\geq 1$, ${\mathcal{H}%
}_{q}$ denotes the $q$th Wiener chaos of $Y^{(1)}$, defined as the
closed linear
subspace of $L^{2}(\Omega )$ generated by the random variables $%
\{H_{q}(Y^{(1)}(h)),h\in {{\mathfrak{H}}},\Vert h\Vert _{{\mathfrak{H}}}=1\}$ where $%
H_{q}$ is the $q$th Hermite polynomial.   Wiener chaos of different
orders are orthogonal in $L^{2}\left( \Omega \right) $.
\\
The mapping ${I_{q}(h^{\otimes q}):}=q!H_{q}(Y^{(1)}(h))$ is a
linear isometry between the symmetric tensor product
${\mathfrak{H}}^{\odot q}$ (equipped with the modified norm $\Vert
.\Vert _{{\mathfrak{H}}^{\odot q}}=\sqrt{q!}\Vert .\Vert
_{{\mathfrak{H}}^{\otimes q}}$) and ${\mathcal{H}}_{q}$.
For every $f,g\in {{%
\mathfrak{H}}}^{\odot q}$ the following extended isometry property
holds
\begin{equation*}
E\left( I_{q}(f)I_{q}(g)\right) =q!\langle f,g\rangle _{{\mathfrak{H}}%
^{\otimes q}}.
\end{equation*}%
What is typically referred to as the product formula on Wiener space
is the version of the above formula before taking expectations (see
Section 2.7.3 of \cite{NP-book}). In our work, beyond the zero-order
term in that formula, which coincides with the expectation above, we
will only need to know the full formula for $q=1$, which is
\begin{equation}I_{1}(f)I_{1}(g)=\frac12 I_{2}\left( f\otimes g+g\otimes f\right)
+\langle f,g\rangle
_{{\mathfrak{H}}}.\label{product-formula}\end{equation}

Let $\{e_k , k \geq 1\}$ be a complete orthonormal system in the
Hilbert space ${\mathfrak{H}}$. Given  $f\in {{\mathfrak{H}}}^{\odot
n},g\in {{\mathfrak{H}}}^{\odot m}$, and $p = 1,\ldots , n \wedge
m$, the $p-$th contraction between $f$ and $g$ is the element of
${\mathfrak{H}^{\otimes (m+n-2p)}}$   defined by
 \[f\otimes_p g=\sum_{i_1,\ldots,i_p=1}^{\infty}\langle f,e_{i_1}\otimes\ldots\otimes e_{i_p}\rangle
_{{\mathfrak{H}^{\otimes p}}}\otimes\langle
g,e_{i_1}\otimes\ldots\otimes e_{i_p}\rangle
_{{\mathfrak{H}^{\otimes p}}}.\]

\vspace{0.3cm} Throughout the paper $Z$ denotes a standard normal
random variable, and  $C$ denotes a generic positive constant
(perhaps depending on $\alpha$ and $H$, but not on anything else),
which may change
from line to line.\\

Let us now  state the result of \cite{kp-JVA} we use in this paper.
Recently, based on techniques relied on the combination of Malliavin
calculus and Stein's method (see, e.g., \cite{NP-book}),
\cite{kp-JVA}  provided an  upper bound of the Kolmogrov distance
for central limit theorem of sequences of the form $F_n/G_n$, where
$F_n$ and $G_n$ are functionals of Gaussian fields, see Corollary 1
of \cite{kp-JVA}.

\begin{proposition}[\cite{kp-JVA}]\label{kp}
Let $f_T,g_T\in \mathfrak{H}^{\odot 2}$ for all $T\geq0$, and let
$b_T$ be a positive function of $T$ such that $I_2(g_T)+b_T>0$
almost surely for all $T>0$.  Define for all sufficiently large
positive $T$,
\begin{eqnarray*}
\psi_1(T)&:=&\frac{1}{b_T^2}\sqrt{\left(b^2_T-2\Vert
f_T\Vert^2_{\mathfrak{H}^{\otimes 2}}\right)^2
+8\Vert f_T\otimes_1 f_T\Vert^2_{\mathfrak{H}^{\otimes 2}}},\\
\psi_2(T)&:=&\frac{2}{b_T^2}\sqrt{2 \Vert f_T \otimes_1
g_T\Vert_{\mathfrak{H}^{\otimes2}}+\langle
f_T,g_T\rangle^2_{\mathfrak{H}^{\otimes 2}}},\\
\psi_3(T)&:=&\frac{2}{b_T^2}\sqrt{\Vert
g_T\Vert^4_{\mathfrak{H}^{\otimes 2}} +2\Vert g_T\otimes_1
g_T\Vert^2_{\mathfrak{H}^{\otimes 2}}}.
\end{eqnarray*}
Suppose that $\psi_i(T)\rightarrow 0$ for $i=1,2,3$, as
$T\rightarrow \infty$. Then there exists a positive constant $C$
such that for all sufficiently large positive $T$,
\begin{equation*}
\sup_{z\in \mathbb{R}}\left\vert \mathbb{P}\left(\frac{I_2(f_T)}{
I_2(g_T)+b_T}\leq z\right)-\mathbb{P}\left( Z\leq
z\right)\right\vert\leq C  \max_{i=1,2,3} \psi_i(T).
\end{equation*}
\end{proposition}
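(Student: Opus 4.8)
The plan is to exploit the almost-sure positivity of the denominator to linearise the defining event, reducing the ratio to a single element of the second Wiener chaos at each fixed level, and then to run the Malliavin--Stein machinery on that element. Since $I_2(g_T)+b_T>0$ almost surely, for every $z\in\R$ we have
\begin{equation*}
P\!\left(\frac{I_2(f_T)}{I_2(g_T)+b_T}\le z\right)=P\!\left(I_2(f_T)-zI_2(g_T)\le zb_T\right)=P\!\left(I_2(h_z)\le zb_T\right),
\end{equation*}
where $h_z:=f_T-zg_T\in\mathfrak{H}^{\odot2}$ and, by the isometry, $\sigma_z^2:=E\!\left[I_2(h_z)^2\right]=2\|h_z\|^2$ (all norms and inner products are taken in $\mathfrak{H}^{\otimes2}$). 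The task thus becomes a Berry--Ess\'een estimate, \emph{uniform in} $z$, for the centred second-chaos variable $I_2(h_z)$ evaluated at the deterministic level $zb_T$.

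First I would treat $z$ as fixed and invoke the standard Malliavin--Stein bound for a normalised element of the second chaos: using $-DL^{-1}I_2(h_z)=\tfrac12 DI_2(h_z)$ and $\tfrac12\|DI_2(h_z)\|_{\mathfrak{H}}^2=\sigma_z^2+2I_2(h_z\otimes_1 h_z)$, one gets
\begin{equation*}
d_{\mathrm{Kol}}\!\left(\frac{I_2(h_z)}{\sigma_z},Z\right)\le C\,\frac{\|h_z\otimes_1 h_z\|}{\|h_z\|^2},
\end{equation*}
so that, writing $m_z:=zb_T/\sigma_z$, the reduced probability equals $\Phi(m_z)$ up to this error. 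Next I would expand the bilinear data in $z$,
\begin{gather*}
\sigma_z^2=2\|f_T\|^2-4z\langle f_T,g_T\rangle+2z^2\|g_T\|^2,\\
h_z\otimes_1 h_z=f_T\otimes_1 f_T-z\,(f_T\otimes_1 g_T+g_T\otimes_1 f_T)+z^2\,g_T\otimes_1 g_T,
\end{gather*}
so that every error term is written through the quantities $\|f_T\|^2$, $\|g_T\|^2$, $\langle f_T,g_T\rangle$, $\|f_T\otimes_1 f_T\|$, $\|f_T\otimes_1 g_T\|$, $\|g_T\otimes_1 g_T\|$ and $b_T$ that assemble into $\psi_1,\psi_2,\psi_3$.

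The estimate is then organised by the size of the level. For moderate $z$, say $|z|\le L_T$ with $L_T\asymp\sqrt{\log(1/\psi_3(T))}$, the hypotheses $\psi_1(T),\psi_3(T)\to0$ force $\sigma_z\approx b_T$ and $m_z\approx z$, and I would bound the level discrepancy by the mean value theorem,
\begin{gather*}
|\Phi(m_z)-\Phi(z)|\le\phi(\xi_z)\,|z|\,\frac{|b_T^2-\sigma_z^2|}{\sigma_z(b_T+\sigma_z)},\\
b_T^2-\sigma_z^2=(b_T^2-2\|f_T\|^2)+4z\langle f_T,g_T\rangle-2z^2\|g_T\|^2,
\end{gather*}
absorbing the powers $|z|,|z|^2,|z|^3$ against the Gaussian density via $\sup_{z}|z|^k\phi(z)\le C_k$. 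This produces the ``variance'' parts $|b_T^2-2\|f_T\|^2|/b_T^2$, $|\langle f_T,g_T\rangle|/b_T^2$, $\|g_T\|^2/b_T^2$ of $\psi_1,\psi_2,\psi_3$, while the Berry--Ess\'een error supplies the ``contraction'' parts $\|f_T\otimes_1 f_T\|/b_T^2$, $\|f_T\otimes_1 g_T\|/b_T^2$, $\|g_T\otimes_1 g_T\|/b_T^2$. For the tails $|z|>L_T$ I would drop the Gaussian comparison entirely: $\bar\Phi(L_T)$ is already below the target order, and Chebyshev's inequality bounds $P(I_2(h_z)>zb_T)\le\sigma_z^2/(z^2b_T^2)\lesssim\|g_T\|^2/b_T^2\lesssim\psi_3(T)$ for $z$ large and positive (with the symmetric bound for $z$ large and negative), so the tail contribution is dominated by $\psi_3(T)$.

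The main obstacle is exactly this passage from a fixed level to the supremum over $z\in\R$. For each individual $z$ the bound is routine, but reconciling the two regimes is delicate. In the bulk the normalisation $\sigma_z$ drifts with $z$, so the per-level Berry--Ess\'een error carries an explicit factor $z^2$ in front of $\|g_T\otimes_1 g_T\|$; to land on the clean rate $\max_i\psi_i(T)$ rather than on $\psi_3(T)\log(1/\psi_3(T))$, this factor must be genuinely paid for by the Gaussian tail, which forces a sharp location-dependent Stein solution (equivalently, an integration by parts against the denominator fluctuation $I_2(g_T)$ that never lets $z$ appear undamped). In the tails the Gaussian comparison collapses and must be replaced by the moment estimate above, whose legitimacy rests squarely on the standing assumption $I_2(g_T)+b_T>0$ a.s. Choosing $L_T$ so the two regimes match, and checking that every polynomial-in-$z$ weight is absorbed by $\phi$ with absolute constants, is where the real work lies. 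An alternative route is to apply Stein's method directly to the ratio $W=I_2(f_T)/(I_2(g_T)+b_T)$ through $E[Wf(W)]=\tfrac12 E[\langle D(f(W)/(I_2(g_T)+b_T)),DI_2(f_T)\rangle]$, which yields the three families simultaneously with no explicit $z$; there the crux shifts to controlling the negative moments $E[(I_2(g_T)+b_T)^{-k}]$.
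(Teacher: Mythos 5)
This proposition is not proved in the paper at all: it is quoted verbatim as Corollary 1 of \cite{kp-JVA}, so there is no internal proof to compare against, and your attempt has to stand on its own. Your opening moves are sound, and they do coincide with how the cited source begins: since $I_2(g_T)+b_T>0$ a.s., the event $\{I_2(f_T)/(I_2(g_T)+b_T)\le z\}$ equals $\{I_2(h_z)\le zb_T\}$ with $h_z=f_T-zg_T$, and the bilinear expansions of $\sigma_z^2$ and $h_z\otimes_1 h_z$ are exactly the quantities from which $\psi_1,\psi_2,\psi_3$ are assembled. The problem is that the two-regime scheme you then set up does not close, and the part you defer (``where the real work lies'') is precisely the whole theorem.

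Concretely, two steps fail. In the bulk $|z|\le L_T\asymp\sqrt{\log(1/\psi_3(T))}$ you invoke the per-level Nourdin--Peccati Kolmogorov bound $d_{\mathrm{Kol}}\left(I_2(h_z)/\sigma_z,Z\right)\le C\Vert h_z\otimes_1h_z\Vert/\Vert h_z\Vert^2$; this bound is uniform over levels, so it cannot be damped by the Gaussian density at the level $m_z=zb_T/\sigma_z\approx z$, and the $z^2\, g_T\otimes_1 g_T$ term inside $h_z\otimes_1h_z$ produces a contribution of order $L_T^2\Vert g_T\otimes_1g_T\Vert/b_T^2\asymp\psi_3(T)\log(1/\psi_3(T))$, which is not $O(\max_i\psi_i(T))$; you acknowledge this, but the ``sharp location-dependent Stein solution'' that would repair it is exactly the technical content of Kim--Park's optimality argument and is nowhere constructed. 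In the tail $|z|>L_T$ your Chebyshev estimate is wrong as stated: $\sigma_z^2/(z^2b_T^2)=2\Vert f_T\Vert^2/(z^2b_T^2)-4\langle f_T,g_T\rangle/(zb_T^2)+2\Vert g_T\Vert^2/b_T^2$, and since $\psi_1(T)\to0$ forces $2\Vert f_T\Vert^2/b_T^2\to1$, the first term is of order $1/z^2\approx1/\log(1/\psi_3(T))$ just above the cutoff, which dwarfs $\psi_3(T)$; the claim $P(I_2(h_z)>zb_T)\lesssim\Vert g_T\Vert^2/b_T^2$ silently drops this dominant term. Consequently, in the whole intermediate range $\sqrt{\log(1/\psi)}\lesssim|z|\lesssim\psi^{-1/2}$ (with $\psi:=\max_i\psi_i$) neither of your bounds achieves the stated rate: repairing the tail requires Bernstein/Hanson--Wright-type exponential inequalities for second-chaos variables (with control of the operator norm of $h_z$), not second moments, and repairing the bulk requires a non-uniform, tail-weighted Malliavin--Stein inequality. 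Your alternative route via negative moments of $I_2(g_T)+b_T$ is likewise only gestured at. As written, the proposal is a correct reduction plus an honest description of the obstruction, but not a proof of the proposition.
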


\section{Berry Esseen bound for CLT of LSE}

Suppose that  $\frac12<H<1$. Our main interest in this paper is to
provide a Berry-Ess\'een bound for the LSE given in
(\ref{LSE-FOUSK}), of the drift parameter $\alpha>0$ based on the
continuous-time observation of the fOU of the second kind described
by (\ref{FOUSK}).

 Because (\ref{FOUSK}) is linear, it is immediate to solve it explicitly; one then gets
the following formula:
\begin{equation}X_t=e^{-\alpha t}\int_0^te^{\alpha
s}dY_{s}^{(1)}.\label{explicit-FOUSK}
\end{equation}
From (\ref{LSE-FOUSK}) we can write
\begin{eqnarray}\alpha-\widetilde{\alpha}_T=\frac{\int_0^TX_tdY_{t}^{(1)}}{\int_0^TX_t^2dt}.\label{LSE-diff-FOUSK}
\end{eqnarray}
It follows from (\ref{explicit-FOUSK}) that
\begin{eqnarray}\frac{1}{\sqrt{T}}\int_0^TX_tdY_{t}^{(1)}=I^{Y^{(1)}}_2\left(h_T\right),
\mbox{ with }
h_T(s,t):=\frac{1}{2\sqrt{T}}e^{-\alpha|t-s|}1_{[0,T]^2}(s,t).\label{numerator}
\end{eqnarray}
On the other hand, using the product formula
(\ref{product-formula}),
\begin{eqnarray*}X_t^2&=&\left(I_1\left(e^{-\alpha
(t-.)}1_{[0,t]}(.)\right)\right)^2\\
&=&I_2\left(e^{-2\alpha t}e^{\alpha u}e^{\alpha
v}1_{[0,t]^2}(u,v)\right)+\left\|e^{-\alpha
(t-.)}1_{[0,t]}(.)\right\|^2_{\mathfrak{H}}.
\end{eqnarray*}
 Let us introduce the positive constant
\begin{eqnarray}\rho_{\alpha,H}:=\frac{H^{2H}(2H-1)}{\alpha}\beta(H\alpha+1-H,2H-1).\label{exp-rho}\end{eqnarray}
Thus
\begin{eqnarray}&&\frac{1}{T\rho_{\alpha,H}}\int_0^TX_t^2dt\nonumber
\\&=&I_2\left(\frac{1}{T\rho_{\alpha,H}}\int_0^Te^{-2\alpha
t}e^{\alpha u}e^{\alpha
v}1_{[0,t]^2}(u,v)dt\right)+\frac{1}{T\rho_{\alpha,H}}\int_0^Te^{-2\alpha
t}\left\|e^{\alpha u}1_{[0,t]}(u)\right\|^2_{\mathfrak{H}}dt\nonumber\\
&=:&I_2(g_T)+b_T,\label{decomp-denominator}
\end{eqnarray}
where
\begin{eqnarray}b_T:=\frac{1}{T\rho_{\alpha,H}}\int_0^Te^{-2\alpha
t}\left\|e^{\alpha
u}1_{[0,t]}(u)\right\|^2_{\mathfrak{H}}dt,\label{exp-b}
\end{eqnarray}
and
\begin{eqnarray}g_T(u,v)&:=&\frac{1}{T\rho_{\alpha,H}}e^{\alpha
u}e^{\alpha v}\frac{e^{-2\alpha(u\vee v)}-e^{-2\alpha
T}}{2\alpha}1_{[0,T]^2}(u,v)\nonumber\\
&=&\frac{1}{2\alpha\rho_{\alpha,H} T}\left(e^{-\alpha |u-
v|}-e^{-2\alpha T}e^{\alpha u}e^{\alpha
v}\right)1_{[0,T]^2}(u,v)\nonumber
\\
&=&\frac{1}{\alpha\rho_{\alpha,H}
\sqrt{T}}h_T(u,v)-l_T(u,v),\label{decomp-g}
\end{eqnarray}
with $h_T$ is given by (\ref{numerator}), and
\[l_T(u,v):=\frac{1}{2\alpha\rho_{\alpha,H} T}e^{-2\alpha T}e^{\alpha
u}e^{\alpha v}1_{[0,T]^2}(u,v).\] Therefore, combining
(\ref{LSE-diff-FOUSK}), (\ref{numerator}) and
(\ref{decomp-denominator}), we get
\begin{eqnarray}\frac{\sqrt{T}}{\sigma_{\alpha,H}}\left(\alpha-\widetilde{\alpha}_T\right)=\frac{I_2(f_T)}{I_2(g_T)+b_T},\label{LSE2-FOUSK}
\end{eqnarray}
where $\sigma_{\alpha,H}$ is given by (\ref{exp-sigma}), and
\begin{eqnarray}f_T:=\frac{1}{\rho_{\alpha,H}\sigma_{\alpha,H}}h_T.\label{exp-f}
\end{eqnarray}

In order to prove our main result we make use of the following
technical lemmas.

\begin{lemma}\label{lemma1} Let $H\in(\frac12,1)$, and let $b_T$ and  $f_T$ be the functions given by (\ref{exp-b})  and (\ref{exp-f}), respectively.  Then, for all $T>0$,
\begin{eqnarray}|b_T-1|\leq\frac{C}{T},\label{estim-b}
\end{eqnarray}
\begin{eqnarray}
\left|1-2\Vert f_T\Vert^2_{\mathfrak{H}^{\otimes
2}}\right|\leq\frac{C}{T}.\label{estim-f}
\end{eqnarray}
Consequently, for all $T>0$,
\begin{eqnarray*}
\left|b_T^2-2\Vert f_T\Vert^2_{\mathfrak{H}^{\otimes
2}}\right|\leq\frac{C}{T}.
\end{eqnarray*}
\end{lemma}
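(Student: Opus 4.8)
The plan is to exploit a stationarity property of the kernel $r_H$ that collapses every integral into a form where both the limit and its rate become transparent. The first step is to observe that, writing out $r_H(w,z)=H^{2H-1}(2H-1)(e^{w/H}e^{z/H})^{1-H}|e^{w/H}-e^{z/H}|^{2H-2}$ and factoring $e^{w/H}$ (or $e^{z/H}$) out of the absolute value, all the explicit exponentials recombine so that $r_H$ depends only on $|w-z|$:
\begin{equation*}
r_H(w,z)=\psi(|w-z|),\qquad \psi(\tau):=H^{2H-1}(2H-1)e^{-(1-H)\tau/H}\left(1-e^{-\tau/H}\right)^{2H-2}.
\end{equation*}
The function $\psi$ has an integrable singularity at $\tau=0$ (because $2H-2>-1$) and decays exponentially as $\tau\to\infty$; these two facts drive all the estimates below.

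For (\ref{estim-b}), I would substitute $w=t-x$, $z=t-y$ in the inner double integral defining $\|e^{\alpha u}1_{[0,t]}(u)\|^2_{\mathfrak{H}}$. The prefactor $e^{-2\alpha t}e^{\alpha w}e^{\alpha z}$ becomes $e^{-\alpha(x+y)}$, while the $t$-dependence inside $r_H$ cancels by the stationarity above, so that $e^{-2\alpha t}\|e^{\alpha u}1_{[0,t]}(u)\|^2_{\mathfrak{H}}=\phi(t)$ with $\phi(t):=\int_0^t\int_0^t e^{-\alpha(x+y)}\psi(|x-y|)\,dx\,dy$. Since the integrand is nonnegative, $\phi$ increases to its limit, and a direct computation (substituting $p=e^{-x/H}$, $q=e^{-y/H}$ and using the Beta integral) identifies $\phi(\infty)=\rho_{\alpha,H}$, so $b_T=\frac1{T\rho_{\alpha,H}}\int_0^T\phi(t)\,dt$. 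Hence $|b_T-1|=\frac1{T\rho_{\alpha,H}}\int_0^T(\rho_{\alpha,H}-\phi(t))\,dt$, and by Fubini $\int_0^\infty(\rho_{\alpha,H}-\phi(t))\,dt=\int_{(0,\infty)^2}\max(x,y)\,e^{-\alpha(x+y)}\psi(|x-y|)\,dx\,dy<\infty$; the exponential decay controls the linear growth of $\max(x,y)$, and the singularity of $\psi$ lies on the set where $\max(x,y)$ is finite, so the integral converges and $|b_T-1|\le C/T$.

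For (\ref{estim-f}), the same stationarity lets me write $\|h_T\|^2_{\mathfrak{H}^{\otimes2}}=\frac1{4T}\int_{[0,T]^4}e^{-\alpha|x_1-x_2|}e^{-\alpha|y_1-y_2|}\psi(|x_1-y_1|)\psi(|x_2-y_2|)\,dx_1dy_1dx_2dy_2$. Substituting $x_2=x_1+a$, $y_1=x_1+b$, $y_2=x_1+c$ makes the integrand independent of $x_1$, so the $x_1$-integration contributes only the length of the admissible interval, namely $(T-\max(0,a,b,c)-\max(0,-a,-b,-c))^+$. Dividing by $T$ and letting $T\to\infty$ gives, by dominated convergence, a finite three-dimensional limit that I would identify with $\rho_{\alpha,H}^2\sigma_{\alpha,H}^2/2$ through the change of variables linking it to the integral of $F$ in (\ref{fct-F}); this is exactly the statement $2\|f_T\|^2\to1$. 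For the rate I use $\left|\frac{(T-\max(0,a,b,c)-\max(0,-a,-b,-c))^+}{T}-1\right|\le\frac{|a|+|b|+|c|}{T}$, which reduces matters to the finiteness of $\int_{\R^3}(|a|+|b|+|c|)e^{-\alpha|a|}e^{-\alpha|b-c|}\psi(|b|)\psi(|a-c|)\,da\,db\,dc$; since $e^{-\alpha|a|}$, $\psi(|b|)$ and $e^{-\alpha|b-c|}$ force exponential decay in $a$, $b$ and $c$ (using $|c|\le|b|+|b-c|$), this weighted integral is finite and yields $|1-2\|f_T\|^2|\le C/T$.

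The stated consequence follows by writing $b_T^2-2\|f_T\|^2=(b_T-1)(b_T+1)+(1-2\|f_T\|^2)$ and using that (\ref{estim-b}) keeps $b_T$ bounded. The main obstacle is not the limit identifications—those are essentially the variance computation underlying the known central limit theorem—but controlling the polynomially weighted integrals: one must check that inserting the factor $|a|+|b|+|c|$ (respectively $\max(x,y)$) does not destroy integrability near the diagonal singularities of $\psi$. This succeeds precisely because those singularities occur on sets where the polynomial weight stays bounded, while the exponential factors absorb the weight at infinity; carrying this out cleanly, together with the change-of-variables bookkeeping matching the limit to $\int F$, is where the care is required.
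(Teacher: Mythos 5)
Your argument is correct, but it takes a genuinely different route from the paper's. The paper never makes explicit the stationarity $r_H(w,z)=\psi(|w-z|)$ on which your whole proof rests; it works instead in the time-changed coordinates $a_t=He^{t/H}$. For (\ref{estim-b}) it substitutes $u=x/y$ and splits $\Vert e^{\alpha u}1_{[0,t]}\Vert_{\mathfrak{H}}^{2}=I_t-J_t$ into a Beta-function main term plus an exponentially small remainder; for (\ref{estim-f}) it decomposes $[0,T]^4$ into the $24$ simplices given by the orderings of $(x_1,\dots,x_4)$, reduces to five representatives with multiplicities $8,4,4,4,4$ by symmetry, and in each representative changes variables relative to the largest time point so as to land directly on integrals of $F$ over ordered subregions of $(0,\infty)^3$, each with an $O(1/T)$ error. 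Your single shift change of variables, the window-length bound $\left|T^{-1}\left(T-\max(0,a,b,c)-\max(0,-a,-b,-c)\right)^{+}-1\right|\le (|a|+|b|+|c|)/T$, and your treatment of $b_T$ through the monotone function $\phi(t)\uparrow\rho_{\alpha,H}$ together with Fubini replace all of that combinatorics, and the weighted integrals you need are finite for exactly the reasons you give (singularity exponent $2H-2>-1$, exponential decay, $|c|\le|b|+|b-c|$); your identification $\phi(\infty)=\rho_{\alpha,H}$ via the Beta integral is also correct. The only step you assert rather than prove is the identification of your limit $\frac14\int_{\R^3}G$, where $G(a,b,c):=e^{-\alpha|a|}e^{-\alpha|b-c|}\psi(|b|)\psi(|a-c|)$, with $\rho_{\alpha,H}^2\sigma_{\alpha,H}^2/2=H^{4H-2}(2H-1)^2\int_{(0,\infty)^3}F$, and this is not merely a change of variables: the integrands do match, since a short computation gives $H^{4H-2}(2H-1)^2F(y_1,y_2,y_3)=G(y_2,y_1,y_3)$ on $(0,\infty)^3$, but you still need the domain identity $\int_{\R^3}G\,da\,db\,dc=4\int_{(0,\infty)^3}G\,da\,db\,dc$, and the factor $4$ requires an argument. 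It comes from symmetry: $G(a,b,c)=G(-a,c-a,b-a)=G(c-b,-b,a-b)$ (swap the two copies of $h_T$, and use the symmetry of $h_T$ in its two arguments); these are measure-preserving linear maps carrying the orthant $\{0<\min(a,b,c)\}$ (in which the base point $x_1$ is the smallest of the four times) onto the three regions in which $x_2$, $y_1$, $y_2$ respectively is smallest, and these four regions partition $\R^3$ up to a null set. With that symmetry paragraph supplied, your proof is complete and yields the same $C/T$ rates as the paper's, in my view more transparently.
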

\begin{proof}
Using (\ref{inn.scal-1}) and making the change of variables $u=x/y$,
we get
\begin{eqnarray}\left\|e^{\alpha
u}1_{[0,t]}(u)\right\|^2_{\mathfrak{H}}&=&H(2H-1)\int_{a_0}^{a_t}\int_{a_0}^{a_t}
 (x/H)^{H\alpha-H}(y/H)^{H\alpha-H}|x-y|^{2H-2}dxdy\nonumber\\
 &=&2H^{2H(1-\alpha)+1}(2H-1)\int_{a_0}^{a_t}dy\int_{a_0}^{y}dx
 (xy)^{H\alpha-H}|x-y|^{2H-2}
 \nonumber\\
 &=&2H^{2H(1-\alpha)+1}(2H-1)\int_{a_0}^{a_t}dyy^{2H\alpha-1}\int_{a_0/y}^{1}du
 u^{H\alpha-H}|1-u|^{2H-2}\nonumber\\
 &=&2H^{2H(1-\alpha)+1}(2H-1)\int_{a_0}^{a_t}dyy^{2H\alpha-1}\int_{0}^{1}du
 u^{H\alpha-H}|1-u|^{2H-2}\nonumber\\
 &&-2H^{2H(1-\alpha)+1}(2H-1)\int_{a_0}^{a_t}dyy^{2H\alpha-1}\int_0^{a_0/y}du
 u^{H\alpha-H}|1-u|^{2H-2}\nonumber\\
 &=:&I_t-J_t,\label{I-J}
\end{eqnarray}
where
\begin{eqnarray*}I_t
 &=&2H^{2H(1-\alpha)+1}(2H-1)\beta(H\alpha+1-H,2H-1)\int_{a_0}^{a_t}y^{2H\alpha-1}dy\\
 &=&\frac{H^{2H}(2H-1)}{\alpha}\beta(H\alpha+1-H,2H-1)(e^{2\alpha
 t}-1).
\end{eqnarray*}
Moreover,
\begin{eqnarray*}\frac{1}{T}\int_0^Te^{-2\alpha t}I_tdt
 &=&\frac{H^{2H}(2H-1)}{\alpha}\beta(H\alpha+1-H,2H-1)\left(1+\frac{e^{-2\alpha
 t}-1}{2\alpha T}\right).
\end{eqnarray*}
Thus
\begin{eqnarray}\left|\frac{1}{T\rho_{\alpha,H}}\int_0^Te^{-2\alpha
t}I_tdt-1
 \right|&=&\frac{1-e^{-2\alpha
 t}}{2\alpha T}
 \leq\frac{1}{2\alpha T}.\label{I}
\end{eqnarray}
On the other hand,
\begin{eqnarray*}|J_t|&\leq&2H^{2H(1-\alpha)+1}(2H-1)\int_{a_0}^{a_t}dyy^{2H\alpha-1}(a_0/y)^{H\alpha}\int_0^{a_0/y}du
 u^{-H}(1-u)^{2H-2}\\&\leq&2H^{2H-\alpha H+1}(2H-1)\int_{a_0}^{a_t}dyy^{H\alpha-1}\int_0^{1}du
 u^{-H}(1-u)^{2H-2}\\
 &=&2H^{2H-\alpha H+1}(2H-1)\beta(1-H,2H-1)\frac{e^{\alpha t}-1}{H\alpha}
 \\&\leq&Ce^{\alpha t}.
\end{eqnarray*}
Hence,
\begin{eqnarray}\frac{1}{T}\int_0^Te^{-2\alpha
t}|J_t|dt&\leq&\frac{C}{T}\int_0^Te^{-\alpha
t}dt\leq\frac{C}{T}.\label{J}
\end{eqnarray}
Therefore, combining (\ref{I-J}), (\ref{I}) and (\ref{J}), we deduce
(\ref{estim-b}).\\
Now let us prove (\ref{estim-f}). First we decompose the integral
$\int_{[0,T]^4}$ into
\begin{eqnarray}\int_{[0,T]^4}=\int_{\cup_{i=1}^{5}A_{i,T}}=\sum_{i=1}^{5}\int_{A_{i,T}},\label{decomp-[0,T]4}\end{eqnarray}
where
\[A_{1,T}=\cup_{i=1}^8D_{i,T}, \ A_{2,T}=\cup_{i=9}^{12}D_{i,T}, \
A_{3,T}=\cup_{i=13}^{16}D_{i,T}, \  A_{4,T}=\cup_{i=17}^{20}D_{i,T},
\ A_{5,T}=\cup_{i=21}^{24}D_{i,T},\] with
\begin{eqnarray*}&&D_{1,T}:=\{0<x_1<x_2<x_3<x_4<T\},\
D_{2,T}:=\{0<x_1<x_2<x_4<x_3<T\},\\
&&D_{3,T}:= \{0<x_2<x_1<x_3<x_4<T,\ D_{4,T}:=
\{0<x_2<x_1<x_4<x_3<T\},
\\
&&D_{5,T}:= \{0<x_3<x_4<x_1<x_2<T\},\ D_{6,T}:=
\{0<x_3<x_4<x_2<x_1<T\},
\\
&&D_{7,T}:= \{0<x_4<x_3<x_1<x_2<T\},\ D_{8,T}:=
\{0<x_4<x_3<x_2<x_1<T\},
\end{eqnarray*}
\begin{eqnarray*}&&D_{9,T}:=\{0<x_1<x_3<x_2<x_4<T\},\
D_{10,T}:=\{0<x_3<x_1<x_4<x_2<T\},\\
&&D_{11,T}:= \{0<x_2<x_4<x_1<x_3<T\},\ D_{12,T}:=
\{0<x_4<x_2<x_3<x_1<T\},
\end{eqnarray*}
\begin{eqnarray*}&&D_{13,T}:=\{0<x_1<x_3<x_4<x_2<T\},\
D_{14,T}:=\{0<x_3<x_1<x_2<x_4<T\},\\
&&D_{15,T}:= \{0<x_2<x_4<x_3<x_1<T\},\ D_{16,T}:=
\{0<x_4<x_2<x_1<x_3<T\},
\end{eqnarray*}
\begin{eqnarray*}&&D_{17,T}:=\{0<x_1<x_4<x_2<x_3<T\},\
D_{18,T}:=\{0<x_4<x_1<x_3<x_2<T\},\\
&&D_{19,T}:= \{0<x_2<x_3<x_1<x_4<T\},\ D_{20,T}:=
\{0<x_3<x_2<x_4<x_1<T\},
\end{eqnarray*}
\begin{eqnarray*}&&D_{21,T}:=\{0<x_1<x_4<x_3<x_2<T\},\
D_{22,T}:=\{0<x_4<x_1<x_2<x_3<T\},\\
&&D_{23,T}:= \{0<x_3<x_2<x_1<x_4<T\},\ D_{24,T}:=
\{0<x_2<x_3<x_4<x_1<T\}.
\end{eqnarray*}
Therefore, using (\ref{inn.scal-2}), (\ref{decomp-[0,T]4}), and
setting
\[m_H(x_1,x_2,x_3,x_4):=e^{-\alpha|x_1-x_3|}e^{-\alpha|x_2-x_4|
}r_H(x_1,x_2)r_H(x_3,x_4),\] we have
\begin{eqnarray}\left\|h_T\right\|^2_{\mathfrak{H}^{\otimes 2}}&=&\frac{1}{4T}\int_{[0,T]^4}m_H(x_1,x_2,x_3,x_4)dx_1\ldots dx_4\nonumber
\\
&=&\frac{1}{4T}\left(\int_{A_{1,T}}+\int_{A_{2,T}}+\int_{A_{3,T}}+\int_{A_{4,T}}+\int_{A_{5,T}}\right)m_H(x_1,x_2,x_3,x_4)dx_1\ldots dx_4\nonumber\\
&=&\frac{1}{4T}\left(8\int_{D_{1,T}}+4\int_{D_{9,T}}+4\int_{D_{13,T}}+4\int_{D_{17,T}}+4\int_{D_{21,T}}\right)m_H(x_1,x_2,x_3,x_4)dx_1\ldots dx_4\nonumber\\
&=:&2I_{1,T}+I_{2,T}+I_{3,T}+I_{4,T}+I_{4,T},\label{estim-h}
\end{eqnarray}
where we used the fact that
\begin{eqnarray*}
\int_{D_{1,T}}m_H(x_1,x_2,x_3,x_4)dx_1dx_2dx_3dx_4=\ldots=\int_{D_{8,T}}
m_H(x_1,x_2,x_3,x_4)dx_1dx_2dx_3dx_4,
\end{eqnarray*}
\begin{eqnarray*}
\int_{D_{9,T}}
m_H(x_1,x_2,x_3,x_4)dx_1dx_2dx_3dx_4=\ldots=\int_{D_{12,T}}
m_H(x_1,x_2,x_3,x_4)dx_1dx_2dx_3dx_4,
\end{eqnarray*}
\begin{eqnarray*}
\int_{D_{13,T}}
m_H(x_1,x_2,x_3,x_4)dx_1dx_2dx_3dx_4=\ldots=\int_{D_{16,T}}
m_H(x_1,x_2,x_3,x_4)dx_1dx_2dx_3dx_4,
\end{eqnarray*}
\begin{eqnarray*}
\int_{D_{17,T}}
m_H(x_1,x_2,x_3,x_4)ddx_1dx_2dx_3dx_4=\ldots=\int_{D_{20,T}}
m_H(x_1,x_2,x_3,x_4)dx_1dx_2dx_3dx_4,
\end{eqnarray*}
\begin{eqnarray*}
\int_{D_{21,T}}
m_H(x_1,x_2,x_3,x_4)dx_1dx_2dx_3dx_4=\ldots=\int_{D_{24,T}}
m_H(x_1,x_2,x_3,x_4)dx_1dx_2dx_3dx_4.
\end{eqnarray*}
Let us now estimate $I_{1,T}$. Making the change of variables
$y_3=x_4-x_1$, $y_2=x_4-x_2$, $y_1=x_4-x_3$,  and $y_4=x_4$, we
obtain
\begin{eqnarray}&&\frac{1}{H^{4H-2}(2H-1)^2}I_{1,T}
\nonumber\\&=&\frac{1}{T}\int_0^{T}dx_4\int_{0<x_1<x_2<x_3<x_4}dx_1dx_2dx_3
e^{-\alpha|x_1-x_3|}e^{-\alpha|x_2-x_4|}
e^{(1/H-1)(x_1+x_2+x_3+x_4)}\nonumber\\&&\times \left|
e^{x_1/H}-e^{x_2/H}\right|^{2H-2}\left|
e^{x_3/H}-e^{x_4/H}\right|^{2H-2}
\nonumber\\&=&\frac{1}{T}\int_0^{T}dy_4\int_{0<y_1<y_2<y_3<y_4}dy_1dy_2dy_3
F(y_1,y_2,y_3)
\nonumber\\&=&\frac{1}{T}\left[\int_0^{T}dy_4\int_{0<y_1<y_2<y_3<\infty}dy_1dy_2dy_3
-\int_0^{T}dy_4\int_{y_4}^{\infty}dy_3\int_0^{y_3}dy_2\int_0^{y_2}dy_1\right]F(y_1,y_2,y_3)
\nonumber\\&=&\left[\int_{0<y_1<y_2<y_3<\infty}dy_1dy_2dy_3
-\frac{1}{T}\int_0^{T}dy_4\int_{y_4}^{\infty}dy_3\int_0^{y_3}dy_2\int_0^{y_2}dy_1\right]F(y_1,y_2,y_3),\label{estim1-D_1}
\end{eqnarray}
where the function $F$ is given by (\ref{fct-F}). Moreover,
\begin{eqnarray}
&&\frac{1}{T}\int_0^{T}dy_4\int_{y_4}^{\infty}dy_3\int_0^{y_3}dy_2\int_0^{y_2}dy_1
F(y_1,y_2,y_3)\nonumber\\
&\leq&\frac{1}{T}\int_0^{T}dy_4\int_{y_4}^{\infty}dy_3\int_0^{y_3}dy_2\int_0^{y_2}dy_1
 e^{-\alpha y_3} e^{(1-\frac{1}{H})(y_1+y_2+y_3)}
\left| e^{-\frac{y_2}{H}}-e^{- \frac{y_3}{H}}\right|^{2H-2}\left|
1-e^{-\frac{y_1}{H}}\right|^{2H-2}
\nonumber\\
&\leq&\frac{H\beta(1-H,2H-1)}{T}\int_0^{T}dy_4\int_{y_4}^{\infty}dy_3\int_0^{y_3}dy_2
 e^{-\alpha y_3} e^{(1-\frac{1}{H})(y_2+y_3)}
\left| e^{-\frac{y_2}{H}}-e^{-\frac{y_3}{H}}\right|^{2H-2}
\nonumber\\
&=&\frac{H\beta(1-H,2H-1)}{T}\int_0^{T}dy_4\int_{y_4}^{\infty}dy_3\int_0^{y_3}dy_2
 e^{-\alpha y_3} e^{(1-\frac{1}{H})(y_3-y_2)}
\left| 1-e^{-(y_3-y_2)/H}\right|^{2H-2}
\nonumber\\
&=&\frac{H\beta(1-H,2H-1)}{T}\int_0^{T}dy_4\int_{y_4}^{\infty}dy_3\int_0^{y_3}dy_2
 e^{-\alpha y_3} e^{(1-\frac{1}{H})x_2}
\left| 1-e^{-\frac{x_2}{H}}\right|^{2H-2}
\nonumber\\
&\leq&\frac{\left(H\beta(1-H,2H-1)\right)^2}{T}\int_0^{T}dy_4\int_{y_4}^{\infty}dy_3
 e^{-\alpha y_3}
\nonumber\\
&\leq&\frac{\left(H\beta(1-H,2H-1)\right)^2}{\alpha^2T}.\label{estim2-D_1}
\end{eqnarray}
Combining (\ref{estim1-D_1}) and (\ref{estim2-D_1}) we deduce
\begin{eqnarray}\left|I_{1,T}-H^{4H-2}(2H-1)^2\int_{0<y_1<y_2<y_3<\infty}dy_1dy_2dy_3
F(y_1,y_2,y_3)\right|\leq \frac{C}{T}.\label{estim1-I_1}
\end{eqnarray}
Moreover,
\begin{eqnarray}\left|I_{1,T}-H^{4H-2}(2H-1)^2\int_{0<y_1<y_3<y_2<\infty}
F(y_1,y_2,y_3)dy_1dy_2dy_3\right|\leq \frac{C}{T},\label{estim2-I_1}
\end{eqnarray}
since $$\int_{0<y_1<y_3<y_2<\infty}
F(y_1,y_2,y_3)dy_1dy_3dy_2=\int_{0<y_1<y_2<y_3<\infty}
F(y_1,y_2,y_3)dy_1dy_2dy_3.$$\\
Using similar arguments as above, we can conclude
\begin{eqnarray}\left|I_{2,T}-H^{4H-2}(2H-1)^2\int_{0<y_2<y_1<y_3<\infty}
F(y_1,y_2,y_3)dy_2dy_1dy_3\right|\leq \frac{C}{T},\label{estim-I_2}
\end{eqnarray}
\begin{eqnarray}\left|I_{3,T}-H^{4H-2}(2H-1)^2\int_{0<y_2<y_3<y_1<\infty}
F(y_1,y_2,y_3)dy_2dy_3dy_1\right|\leq \frac{C}{T},\label{estim-I_3}
\end{eqnarray}
\begin{eqnarray}\left|I_{4,T}-H^{4H-2}(2H-1)^2\int_{0<y_3<y_1<y_2<\infty}
F(y_1,y_2,y_3)dy_3dy_1dy_2\right|\leq \frac{C}{T},\label{estim-I_4}
\end{eqnarray}
\begin{eqnarray}\left|I_{5,T}-H^{4H-2}(2H-1)^2\int_{0<y_3<y_2<y_1<\infty}
F(y_1,y_2,y_3)dy_3dy_2dy_1\right|\leq \frac{C}{T}.\label{estim-I_5}
\end{eqnarray}
Combining (\ref{estim-h}), (\ref{estim1-I_1})---(\ref{estim-I_5})
and the fact that
\begin{eqnarray*}(0,\infty)^3&=&\{0<y_1<y_2<y_3\}\cup\{0<y_1<y_3<y_2\}\cup\{0<y_2<y_1<y_3\}\\
&&\cup\{0<y_2<y_3<y_1\}\cup\{0<y_3<y_1<y_2\}\cup\{0<y_3<y_2<y_1\},
\end{eqnarray*}
 we deduce that
\begin{eqnarray}\left|\left\|h_T\right\|^2_{\mathfrak{H}^{\otimes 2}}-H^{4H-2}(2H-1)^2\int_{(0,\infty)^3}
F(y_1,y_2,y_3)dy_1dy_2dy_3\right|\leq \frac{C}{T},\label{estim2-h}
\end{eqnarray}
which proves (\ref{estim-f}).
\end{proof}

\begin{lemma}\label{lemma2} Suppose $H\in(\frac12,1)$. Let $g_T$ and  $f_T$ be the functions given by (\ref{decomp-g})  and (\ref{exp-f}),
respectively. Then, for all $T>0$,
\begin{eqnarray}
\Vert f_T\otimes_1 f_T\Vert_{\mathfrak{H}^{\otimes 2}}\leq
\frac{C}{\sqrt{T}},\label{f-contract-f}
\end{eqnarray}
\begin{eqnarray}\Vert g_T\Vert_{\mathfrak{H}^{\otimes 2}} \leq \frac{C}{
\sqrt{T}}, \label{norm-g}
\end{eqnarray}
\begin{eqnarray}\Vert g_T\otimes_1 g_T\Vert_{\mathfrak{H}^{\otimes 2}}\leq\frac{C}{
T^{3/2}},\label{g-contract-g}
\end{eqnarray}
\begin{eqnarray}\Vert f_T \otimes_1
g_T\Vert_{\mathfrak{H}^{\otimes2}}\leq \frac{C}{T},
\label{f-contract-g}
\end{eqnarray}
\begin{eqnarray}\left|\langle
f_T,g_T\rangle_{\mathfrak{H}^{\otimes 2}}\right|\leq \frac{C}{
\sqrt{T}}. \label{f-prod-scal-g}
\end{eqnarray}
\end{lemma}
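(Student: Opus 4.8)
The plan is to reduce all five bounds to a short list of building blocks for the two kernels $h_T$ and $\phi_T\otimes\phi_T$, where $\phi_T(u):=e^{\alpha u}1_{[0,T]}(u)$, and then to exploit a single structural feature of the covariance kernel $r_H$. By (\ref{exp-f}) and (\ref{decomp-g}) we have $f_T=\frac{1}{\rho_{\alpha,H}\sigma_{\alpha,H}}h_T$ and $g_T=\frac{1}{\alpha\rho_{\alpha,H}\sqrt{T}}h_T-l_T$, where $l_T=\frac{e^{-2\alpha T}}{2\alpha\rho_{\alpha,H}T}\,\phi_T\otimes\phi_T$ is \emph{rank one}. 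Since the contraction $\otimes_1$ and the inner product are bilinear, every quantity appearing in the lemma is a fixed linear combination of the blocks $\|h_T\otimes_1 h_T\|_{\mathfrak{H}^{\otimes 2}}$, $\|h_T\otimes_1 l_T\|_{\mathfrak{H}^{\otimes 2}}$, $\|l_T\otimes_1 l_T\|_{\mathfrak{H}^{\otimes 2}}$, $\langle h_T,l_T\rangle_{\mathfrak{H}^{\otimes 2}}$, $\|l_T\|_{\mathfrak{H}^{\otimes 2}}$ and $\|h_T\|_{\mathfrak{H}^{\otimes 2}}$. From Lemma \ref{lemma1} we already know $\|h_T\|^2_{\mathfrak{H}^{\otimes 2}}=\rho_{\alpha,H}^2\sigma_{\alpha,H}^2\|f_T\|^2_{\mathfrak{H}^{\otimes 2}}=O(1)$, so the task is to locate the $T$-order of each remaining block.

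The heart of the matter is the bound $\|h_T\otimes_1 h_T\|_{\mathfrak{H}^{\otimes 2}}\le C/\sqrt{T}$, and here I would use the following observation. Writing $r_H(w,z)=H^{2H-1}(2H-1)e^{(1-H)(w+z)/H}|e^{w/H}-e^{z/H}|^{2H-2}$, a direct check gives $r_H(w+\tau,z+\tau)=r_H(w,z)$ for all $\tau$, so that $r_H(w,z)=\bar r_H(w-z)$ is translation invariant with $\bar r_H\in L^1(\R)$: it decays exponentially at $\pm\infty$ and carries only the locally integrable singularity $|w|^{2H-2}$ at the origin, which is exactly where $H>\frac12$ enters since then $2H-2>-1$. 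The kernel $e^{-\alpha|\cdot|}$ is of course translation invariant as well. Expanding $\|h_T\otimes_1 h_T\|^2_{\mathfrak{H}^{\otimes 2}}$ through (\ref{inn.scal-2}) produces $\frac{1}{16T^2}$ times an integral over $[0,T]^8$ whose integrand is a product of four factors $e^{-\alpha|\cdot|}$ and four factors $\bar r_H(\cdot)$ arranged along an $8$-cycle in the eight integration variables; in particular it depends only on differences of those variables. Fixing one variable as a reference and integrating the seven relative coordinates over $\R$, the reference ranges over an interval of length at most $T$, so the $[0,T]^8$ integral is bounded by $T$ times a cyclic convolution of the eight kernels evaluated at the origin. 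Because $e^{-\alpha|\cdot|}\in L^1\cap L^\infty$ while $\bar r_H\in L^1$, convolving the singular factors against the bounded exponential ones yields a finite continuous value at $0$; hence the $[0,T]^8$ integral is $O(T)$, $\|h_T\otimes_1 h_T\|^2_{\mathfrak{H}^{\otimes 2}}=O(1/T)$, and (\ref{f-contract-f}) follows after multiplying by $(\rho_{\alpha,H}\sigma_{\alpha,H})^{-2}$.

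The remaining blocks are easy once the rank-one form of $l_T$ is used. By the computation of $I_t-J_t$ in Lemma \ref{lemma1} one has $\|\phi_T\|^2_{\mathfrak{H}}=\rho_{\alpha,H}e^{2\alpha T}+O(e^{\alpha T})$, whence $\|l_T\|_{\mathfrak{H}^{\otimes 2}}=\frac{e^{-2\alpha T}}{2\alpha\rho_{\alpha,H}T}\|\phi_T\|^2_{\mathfrak{H}}=O(1/T)$, and together with $\|h_T\|_{\mathfrak{H}^{\otimes 2}}=O(1)$ this gives (\ref{norm-g}). For a rank-one $g=\phi\otimes\phi$ one has the clean bound $\|f\otimes_1 g\|_{\mathfrak{H}^{\otimes 2}}=\|T_f\phi\|_{\mathfrak{H}}\,\|\phi\|_{\mathfrak{H}}\le\|f\|_{\mathfrak{H}^{\otimes 2}}\|\phi\|^2_{\mathfrak{H}}=\|f\|_{\mathfrak{H}^{\otimes 2}}\|g\|_{\mathfrak{H}^{\otimes 2}}$, where $T_f$ is the operator with kernel $f$ and the Hilbert--Schmidt norm controls its operator norm. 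Applying this with $f=h_T$ and $f=l_T$ gives $\|h_T\otimes_1 l_T\|_{\mathfrak{H}^{\otimes 2}}\le\|h_T\|\,\|l_T\|=O(1/T)$ and $\|l_T\otimes_1 l_T\|_{\mathfrak{H}^{\otimes 2}}\le\|l_T\|^2=O(1/T^2)$, while Cauchy--Schwarz yields $|\langle h_T,l_T\rangle_{\mathfrak{H}^{\otimes 2}}|\le\|h_T\|\,\|l_T\|=O(1/T)$.

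Assembling the pieces proves the last three estimates: using $g_T\otimes_1 g_T=\frac{1}{\alpha^2\rho_{\alpha,H}^2T}h_T\otimes_1 h_T-\frac{1}{\alpha\rho_{\alpha,H}\sqrt{T}}(h_T\otimes_1 l_T+l_T\otimes_1 h_T)+l_T\otimes_1 l_T$, the three groups of terms are $O(T^{-3/2})$, $O(T^{-3/2})$ and $O(T^{-2})$, giving (\ref{g-contract-g}); since $f_T\otimes_1 g_T=\frac{1}{\rho_{\alpha,H}\sigma_{\alpha,H}}\big(\frac{1}{\alpha\rho_{\alpha,H}\sqrt{T}}h_T\otimes_1 h_T-h_T\otimes_1 l_T\big)$ is $O(T^{-1})$ we get (\ref{f-contract-g}); and $\langle f_T,g_T\rangle_{\mathfrak{H}^{\otimes 2}}=\frac{1}{\rho_{\alpha,H}\sigma_{\alpha,H}}\big(\frac{1}{\alpha\rho_{\alpha,H}\sqrt{T}}\|h_T\|^2_{\mathfrak{H}^{\otimes 2}}-\langle h_T,l_T\rangle_{\mathfrak{H}^{\otimes 2}}\big)=O(T^{-1/2})$ gives (\ref{f-prod-scal-g}). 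The one genuinely delicate point, and the step I expect to absorb most of the work, is the $8$-cycle integral behind $\|h_T\otimes_1 h_T\|$: one must verify that the relative-coordinate integral truly converges, i.e. that the four copies of the singular kernel $\bar r_H$ remain integrable after being convolved against the bounded exponential kernels, which is precisely where the restriction $H\in(\frac12,1)$ is indispensable.
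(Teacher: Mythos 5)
Your proof is correct, but it takes a genuinely different route from the paper's at the decisive step. For (\ref{f-contract-f}) the paper does not compute anything directly: it sets $F_T=I_2(f_T)$, invokes Lemma 5.2.4 of \cite{NP-book} to write $8\Vert f_T\otimes_1 f_T\Vert^2_{\mathfrak{H}^{\otimes 2}}=\mathrm{Var}\bigl(\tfrac12\Vert DF_T\Vert^2_{\mathfrak{H}}\bigr)$, and then quotes Lemma 5.1 of \cite{AV} for the bound $C/T$, so the crux is outsourced to an external reference; you instead prove it from scratch by observing that $r_H(w+\tau,z+\tau)=r_H(w,z)$ (a correct and easily checked identity), so that the squared contraction norm is $\tfrac{1}{16T^2}$ times an $8$-cycle integral of translation-invariant kernels, bounded by $T$ times a cyclic convolution at the origin which is finite because each singular factor $\bar r_H\in L^1$ sits adjacent to a bounded exponential factor (this is exactly where $H>\tfrac12$ enters, via $2H-2>-1$). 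For the remaining estimates the paper exploits pointwise positivity: since $0\le g_T\le \tfrac{1}{\alpha\rho_{\alpha,H}\sqrt{T}}h_T$ and $r_H\ge 0$, the norms and contractions are monotone, which gives (\ref{norm-g}) and (\ref{g-contract-g}) by domination, and then it uses the identity $\Vert f_T\otimes_1 g_T\Vert^2=\langle f_T\otimes_1 f_T,\,g_T\otimes_1 g_T\rangle$ plus Cauchy--Schwarz for (\ref{f-contract-g}); you instead expand everything bilinearly in the blocks $h_T$ and the rank-one kernel $l_T$, control $\Vert l_T\Vert$ by the explicit computation of $\Vert\phi_T\Vert^2_{\mathfrak{H}}$ already inside Lemma \ref{lemma1}, and use Hilbert--Schmidt/operator-norm bounds for the mixed contractions. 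Both assemblies yield the stated rates (your orders $O(T^{-3/2})$, $O(T^{-1})$, $O(T^{-1/2})$ all check out). What each approach buys: the paper's argument is shorter but opaque, since the reader must consult \cite{AV} to see why the variance is $O(1/T)$; yours is self-contained, makes the role of stationarity of $r_H$ and of the hypothesis $H\in(\tfrac12,1)$ completely explicit, and, as a side benefit, your clean decomposition avoids the slip in the paper's displayed chain for (\ref{g-contract-g}), where the first inequality is written with a factor $C/T$ although the pointwise domination actually gives $C/T^2$ (which is what is needed for the stated conclusion $\Vert g_T\otimes_1 g_T\Vert\le C/T^{3/2}$).
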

\begin{proof}
Setting $F_T:=I_2(f_T)$, it follows  from Lemma 5.2.4 of
\cite{NP-book} that
\[8\Vert f_T\otimes_1 f_T\Vert^2_{\mathfrak{H}^{\otimes
2}}=Var\left(\frac12\Vert DF_T\Vert^2_{\mathfrak{H}^{\otimes
2}}\right).\] Further, using Lemma 5.1 of \cite{AV}, we have
\[Var\left(\frac12\Vert DF_T\Vert^2_{\mathfrak{H}^{\otimes
2}}\right)\leq \frac{C}{T}.\] Thus the inequality
(\ref{f-contract-f}) is obtained.\\
 Since for every
$(u,v)\in[0,T]^2$, $g_T(u,v)\geq0$, $h_T(u,v)\geq0$ and
$l_T(u,v)\geq0$, then, using (\ref{decomp-g}), we get
\begin{eqnarray*}\Vert g_T\Vert_{\mathfrak{H}^{\otimes 2}}\leq \frac{1}{\alpha\rho_{\alpha,H} \sqrt{T}}\Vert
h_T\Vert_{\mathfrak{H}^{\otimes 2}}.
\end{eqnarray*}
Combining this with (\ref{estim2-h}), we obtain (\ref{norm-g}).
Similarly, using (\ref{decomp-g}), (\ref{exp-f}) and
(\ref{f-contract-f}), we have
\begin{eqnarray*}\Vert g_T\otimes_1
g_T\Vert^2_{\mathfrak{H}^{\otimes 2}}&\leq& \frac{C}{T}\Vert
h_T\otimes_1 h_T\Vert^2_{\mathfrak{H}^{\otimes 2}}\\
&\leq& \frac{C}{T}\Vert f_T\otimes_1
f_T\Vert^2_{\mathfrak{H}^{\otimes
2}}\\
&\leq& \frac{C}{T^3},
\end{eqnarray*} which implies (\ref{g-contract-g}).\\
It is well known that  $$\Vert f_T \otimes_1
g_T\Vert^2_{\mathfrak{H}^{\otimes2}}=\langle
 f_T \otimes_1
f_T, g_T \otimes_1 g_T\rangle_{\mathfrak{H}^{\otimes 2}},$$ due to a
straightforward application of the definition of contractions and
Fubini theorem.\\ Thus, from (\ref{f-contract-f}) and
(\ref{g-contract-g}), we obtain
\begin{eqnarray*}\Vert f_T \otimes_1
g_T\Vert^2_{\mathfrak{H}^{\otimes2}}&\leq& \Vert f_T \otimes_1
f_T\Vert_{\mathfrak{H}^{\otimes2}}\Vert g_T \otimes_1
g_T\Vert_{\mathfrak{H}^{\otimes2}}\\
&\leq&\frac{C}{T^2},
\end{eqnarray*}
which leads to (\ref{f-contract-g}).\\
Finally, the inequality (\ref{f-prod-scal-g}) is a direct
consequence of (\ref{estim-f}) and (\ref{norm-g}).  The proof of the
lemma is thus complete.
\end{proof}

 Our main result is the following theorem. It  is a consequence of
 Proposition \ref{kp},
Lemma \ref{lemma1} and Lemma \ref{lemma2}.
\begin{theorem}
Suppose $H\in(\frac12,1)$. Then, there exists constant $0 < C <
\infty$, depending only on $\alpha$ and $H$, such that for all
sufficiently large positive $T$,
\begin{eqnarray*}
\sup_{z\in \mathbb{R}}\left\vert
P\left(\frac{\sqrt{T}}{\sigma_{\alpha,H}}\left(
\alpha-\widetilde{\alpha}_t\right)\leq z\right)-P\left( Z\leq
z\right)\right\vert\leq \frac{C}{\sqrt{T}},
\end{eqnarray*}
where   $\sigma_{\alpha,H}$ is given by (\ref{exp-sigma}).

\end{theorem}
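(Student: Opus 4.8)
The plan is to observe that Lemmas \ref{lemma1} and \ref{lemma2} have already done all the analytic work, so that the theorem follows by inserting their estimates into Proposition \ref{kp}. The point of departure is the exact identity (\ref{LSE2-FOUSK}),
\[
\frac{\sqrt{T}}{\sigma_{\alpha,H}}\left(\alpha-\widetilde{\alpha}_T\right)=\frac{I_2(f_T)}{I_2(g_T)+b_T},
\]
which places the left-hand side of the theorem precisely in the setting of Proposition \ref{kp}: it suffices to bound the Kolmogorov distance between $I_2(f_T)/(I_2(g_T)+b_T)$ and $Z$. Before applying the proposition I would check its standing hypothesis $I_2(g_T)+b_T>0$ almost surely; this is immediate from (\ref{decomp-denominator}), since $I_2(g_T)+b_T=\frac{1}{T\rho_{\alpha,H}}\int_0^T X_t^2\,dt$ is strictly positive almost surely.

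Next I would estimate the three quantities $\psi_1(T),\psi_2(T),\psi_3(T)$ of Proposition \ref{kp}. Since Lemma \ref{lemma1} gives $|b_T-1|\le C/T$, the factor $1/b_T^2$ is bounded by a constant for all large $T$ and may be absorbed into the generic $C$. For $\psi_1$, the concluding estimate of Lemma \ref{lemma1} yields $(b_T^2-2\Vert f_T\Vert^2_{\mathfrak{H}^{\otimes2}})^2\le C/T^2$, whereas (\ref{f-contract-f}) gives $8\Vert f_T\otimes_1 f_T\Vert^2_{\mathfrak{H}^{\otimes2}}\le C/T$; the latter dominates, so $\psi_1(T)\le C/\sqrt{T}$. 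For $\psi_2$, the estimates (\ref{f-contract-g}) and (\ref{f-prod-scal-g}) show that both $\Vert f_T\otimes_1 g_T\Vert_{\mathfrak{H}^{\otimes2}}$ and $\langle f_T,g_T\rangle^2_{\mathfrak{H}^{\otimes2}}$ are $O(1/T)$, whence $\psi_2(T)\le C/\sqrt{T}$. Finally, for $\psi_3$, the estimates (\ref{norm-g}) and (\ref{g-contract-g}) give $\Vert g_T\Vert^4_{\mathfrak{H}^{\otimes2}}\le C/T^2$ and $\Vert g_T\otimes_1 g_T\Vert^2_{\mathfrak{H}^{\otimes2}}\le C/T^3$, so that $\psi_3(T)\le C/T$.

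Combining these bounds gives $\max_{i=1,2,3}\psi_i(T)\le C/\sqrt{T}\to 0$, so the hypotheses of Proposition \ref{kp} are met and its conclusion delivers exactly the asserted Kolmogorov bound $C/\sqrt{T}$. The decisive structural observation is that the rate is set by $\psi_1$ and $\psi_2$, each of exact order $1/\sqrt{T}$ and governed respectively by the contraction norm $\Vert f_T\otimes_1 f_T\Vert_{\mathfrak{H}^{\otimes2}}$ and by the inner product $\langle f_T,g_T\rangle_{\mathfrak{H}^{\otimes2}}$, while $\psi_3$ is of the smaller order $1/T$ and is therefore harmless.

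Given the two lemmas, no genuine obstacle remains at the level of the theorem: the only care needed is the bookkeeping of orders and the verification that $1/b_T^2$ stays bounded. The real difficulty lies upstream, inside the lemmas, namely in the fourth-moment/contraction bound (\ref{f-contract-f}), which reduces via $8\Vert f_T\otimes_1 f_T\Vert^2_{\mathfrak{H}^{\otimes2}}=Var(\frac12\Vert DF_T\Vert^2_{\mathfrak{H}^{\otimes2}})$ to an estimate borrowed from \cite{AV}, and in the norm identity (\ref{estim2-h}), obtained through the delicate partition of $[0,T]^4$ into $24$ ordered simplices. Were one to prove the theorem from scratch, these two estimates would be the main hurdles; with the lemmas in hand, the conclusion is a short assembly.
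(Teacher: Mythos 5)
Your proof is correct and follows exactly the paper's intended route: the paper states the theorem as an immediate consequence of Proposition \ref{kp}, Lemma \ref{lemma1} and Lemma \ref{lemma2} via the representation (\ref{LSE2-FOUSK}), and your order bookkeeping ($\psi_1,\psi_2 = O(1/\sqrt{T})$, $\psi_3 = O(1/T)$) correctly fills in the details the paper leaves implicit. Your explicit check that $I_2(g_T)+b_T=\frac{1}{T\rho_{\alpha,H}}\int_0^T X_t^2\,dt>0$ almost surely is a small but welcome addition that the paper does not spell out.
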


\renewcommand\bibname{\large \bf References}


\begin{thebibliography}{99}

\bibitem[\protect\citeauthoryear{Azmoodeh and Morlanes}{Azmoodeh and Morlanes}{2013}]{AM} Azmoodeh, E., Morlanes, G. I. (2013). Drift parameter
estimation for fractional Ornstein-Uhlenbeck process of the second
kind. Statistics. 49(1), 1-18.

\bibitem[\protect\citeauthoryear{Azmoodeh and Viitasaari}{Azmoodeh and Viitasaari}{2015}]{AV} Azmoodeh, E., Viitasaari, L. (2015). Parameter
estimation based on discrete observations of fractional
Ornstein-Uhlenbeck process of the second kind. Statist. Infer.
Stoch. Proc.  18(3),   205-227.

\bibitem[\protect\citeauthoryear{Bajja et al.}{Bajja et al.}{2017}]{BEV} Bajja, S.,  Es-Sebaiy, K., Viitasaari, L.  (2017). Least squares
estimator of fractional Ornstein-Uhlenbeck processes with periodic
mean. J. Korean Statist. Soc. 46(4), 608-622.

\bibitem[\protect\citeauthoryear{Brouste and Iacus}{Brouste and Iacus}{2012}]{BI} Brouste, A., Iacus, S. M. (2012). Parameter estimation for the
discretely observed fractional Ornstein-Uhlenbeck process and the
Yuima R package. Comput. Stat.   28(4), 1529-1547.

\bibitem[\protect\citeauthoryear{Chen et al.}{Chen et al.}{2019}]{CKL} Chen, Y.,  Kuang, N.,  Li, Y. (2019). Berry-Ess\'eeen bound for the
parameter estimation of fractional Ornstein-Uhlenbeck processes.
Stochastics and Dynamics 20(1): 2050023.

\bibitem[\protect\citeauthoryear{Chen and Li}{Chen and Li}{2019}]{CL} Chen, Y., Li, Y. (2019). Berry-Ess\'een bound for the parameter
estimation of fractional Ornstein-Uhlenbeck processes with the hurst
parameter $H\in(0, \frac12)$. Communications in Statistics-Theory
and Methods, 1-18. DOI: 10.1080/03610926.2019.1678641.

\bibitem[\protect\citeauthoryear{Douissi et al.}{Douissi et al.}{2019}]{DEV} Douissi, S., Es-Sebaiy, K., Viens, F. (2019). Berry-Ess\'een
bounds for parameter estimation of general Gaussian processes. ALEA,
Lat. Am. J. Probab. Math. Stat.,  16, 633-664.

\bibitem[\protect\citeauthoryear{El Machkouri et al.}{El Machkouri et al.}{2016}]{EEO} El Machkouri, M., Es-Sebaiy, K.,  Ouknine, Y. (2016). Least squares
estimator for non-ergodic Ornstein-Uhlenbeck processes driven by
Gaussian processes. Journal of the Korean Statistical Society 45(3),
329-341.

\bibitem[\protect\citeauthoryear{El Onsy et al.}{El Onsy et al.}{2017}]{EEV} El Onsy, B.,  Es-Sebaiy, K.,  Viens, F. (2017). Parameter
estimation for a partially observed Ornstein-Uhlenbeck process with
long-memory noise. Stochastics,   89(2),   431-468.


\bibitem[\protect\citeauthoryear{Es-Sebaiy and Viens}{Es-Sebaiy and Viens}{2019}]{EV} Es-Sebaiy, K., Viens, F.  (2019). Optimal rates for
parameter estimation of stationary Gaussian processes. Stochastic
Processes and their Applications, 129(9), 3018-3054.

\bibitem[\protect\citeauthoryear{Hu and Nualart}{Hu and Nualart}{2010}]{HN} Hu, Y., Nualart, D.  (2010). Parameter estimation for fractional Ornstein Uhlenbeck processes.
Statistics and Probability Letters, 80(11-12), 1030-1038.

\bibitem[\protect\citeauthoryear{Hu and Song}{Hu and Song}{2013}]{HS} Hu, Y., Song, J. (2013). Parameter estimation for fractional
Ornstein-Uhlenbeck processes with discrete observations. F. Viens et
al (eds), Malliavin Calculus and Stochastic Analysis: A Festschrift
in Honor of David Nualart, 427-442, Springer.

\bibitem[\protect\citeauthoryear{Kaarakka and Salminen}{Kaarakka and Salminen}{2011}]{KS}  Kaarakka, T., Salminen, P. (2011). On Fractional Ornstein-Uhlenbeck process.
Communications on Stochastic Analysis, 5, 121-133.

\bibitem[\protect\citeauthoryear{Kim and Park}{Kim and Park}{2017}]{kp-JVA} Kim, Y. T.,  Park, H. S. (2017).  Optimal Berry-Esseen bound for
statistical estimations and its application to SPDE.  Journal of
Multivariate Analysis, 155, 284-304.


\bibitem[\protect\citeauthoryear{Kleptsyna and Le Breton}{Kleptsyna and Le Breton}{2002}]{KL} Kleptsyna, M., Le Breton, A. (2002).
 Statistical analysis of the fractional Ornstein-Uhlenbeck type process.
Statistical Inference for Stochastic Processes, 5, 229-241.


\bibitem[\protect\citeauthoryear{Nourdin and Peccati}{Nourdin and Peccati}{2012}]{NP-book} Nourdin, I., Peccati, G. (2012).
 Normal approximations with Malliavin calculus : from Stein's method
to universality. Cambridge Tracts in Mathematics 192. Cambridge
University Press, Cambridge.

\bibitem[\protect\citeauthoryear{Nualart}{Nualart}{2006}]{nualart-book} Nualart, D. (2006).   The Malliavin calculus and
related topics.      Springer-Verlag, Berlin.


\bibitem[\protect\citeauthoryear{Sottinen and Viitasaari}{Sottinen and Viitasaari}{2018}]{SV} Sottinen, T., Viitasaari, L. (2018).
 Parameter estimation for the
Langevin equation with stationary-increment Gaussian noise.
Statistical Inference for Stochastic Processes, 21(3), 569-601.

\end{thebibliography}
\end{document}